\documentclass[12pt]{article}%
\usepackage{amsmath}
\usepackage{amsfonts}
\usepackage{amssymb}
\usepackage{graphicx}%
\usepackage{mathrsfs}

\usepackage{amssymb}
\usepackage{stmaryrd}
\usepackage{tipa}
\usepackage{amsfonts,amsmath,latexsym}
\usepackage{mathrsfs}  
\usepackage{amsbsy}
\usepackage{indentfirst}
\usepackage{amsmath}
\usepackage{amsfonts}
\usepackage{bbm}
\usepackage{mathrsfs}
\usepackage{dsfont}
\usepackage{graphicx}

\allowdisplaybreaks[4]

\usepackage[square, comma, sort&compress, numbers]{natbib}
\usepackage{amssymb, color}
\usepackage[body={15.5cm,21cm}, top=3cm]{geometry}%
\usepackage{hyperref}
\providecommand{\U}[1]{\protect \rule{.1in}{.1in}}

\newtheorem{theorem}{Theorem}[section]

\newtheorem{lemma}[theorem]{Lemma}

\newtheorem{remark}[theorem]{{Remark}}

\begin{document}

\title{A note on estimation of quarticity based on spot volatility}
\author{ Yi Guo\\
Institute for Financial Studies, Shandong University,  Jinan,  250100, China
}
\linespread{1.5}
\definecolor{shadecolor}{RGB}{241, 241, 255}

\maketitle

\begin{abstract}
In this paper, we aim at estimating the quarticity of continuous It\^{o} semimartingales. Instead of using some classical estimators, we introduce a more intuitive one and establish a central limit theorem (CLT) for it, with a convergence rate of $1/\sqrt{\Delta_n}$ in the sense of stable convergence. Moreover, we compare the asymptotic variance of this estimator with that of other existing estimators.
\end{abstract}

\medskip
\textbf{Key words}:  Stable convergence, irregular discretization scheme, central limit theorem.

\medskip
\noindent \textbf{MSC-classification}:  60F17, 60H35.

\section{Introduction}
Consider  a  1-dimensional continuous process $X$ which is defined on the filtered probability space  $\left(\Omega, \mathcal{F},\left(\mathcal{F}_t\right)_{t \geq 0}  \mathrm{P}\right)$
\begin{equation}\label{model}
    X_t=X_0+\int_0^tb_sds+\int_0^t\sigma_sdW_s,
\end{equation}
where $W$ is a $1$-dimensional Brownian motion and $b$, $\sigma$ are $ \mathbb{R}$-valued processes.We aim to estimate the integrated volatility $C_t = \int_0^t c_s \, ds$, where $c = \sigma^2$.  
The most classic method is the \emph{realized volatility}:
\begin{equation}
\widehat{C}\left(\Delta_n\right)_t^{}=\sum_{i=1}^{\left[t / \Delta_n\right]} (\Delta_i^n X)^2 \stackrel{\text { u.c.p. }}{\Longrightarrow}C_t .
\end{equation}
Moreover, a central limit theorem (CLT) holds in the sense of stable convergence:
\begin{equation*}
\frac{1}{\sqrt{\Delta_n}}\left(\widehat{C}\left(\Delta_n\right)-C\right) \stackrel{\mathcal{L}-s}{\Longrightarrow} \mathcal{W},
\end{equation*}
where $\mathcal{W}$ is a continuous process defined on a very good extension of the space $\left(\Omega, \mathcal{F},\left(\mathcal{F}_t\right)_{t \geq 0}, \mathbb{P}\right)$ and, conditionally on $\mathcal{F}$, is a continuous centered Gaussian martingale with variance given by:
\begin{equation}\label{W}
V_t:=\widetilde{\mathbb{E}}\left((\mathcal{W}_t)^2  \mid \mathcal{F}\right)=2\int_0^tc_s^2d s .
\end{equation}
For the concept of stable convergence and its applications in high-frequency statistics, readers may refer to Jacod and Shiryaev \cite{jacod2013}. 
While the realized volatility estimator provides asymptotic normality, constructing a feasible confidence interval for the integrated volatility requires a consistent estimator of its asymptotic variance $V_t = \int_0^t 2c_s^2 \, ds$ (up to a constant factor depending on the setting). 
To this end, we examine the following natural estimator based on fourth powers of increments:
\[
V(\Delta_n)_t := \frac{2}{3\Delta_n} \sum_{i=1}^{\lfloor t/\Delta_n \rfloor - 1} (\Delta_i^n X)^4,
\]
where $\Delta_i^n X = X_{i\Delta_n} - X_{(i-1)\Delta_n}$. 
It can be shown that $V(\Delta_n)_t \stackrel{\text{u.c.p.}}{\Longrightarrow} V_t$, therefore providing consistency. 
Nevertheless, this estimator is known to be inefficient in the sense that its asymptotic variance is larger than the optimal lower bound; i.e., it does not achieve the minimal possible asymptotic variance among regular estimators.

To overcome this drawback, alternative methods based on the estimation of the spot volatility $c_t$ have been proposed in the literature; see, e.g., \cite{ait14}, \cite{al}, \cite{li}. 
The idea is to first construct a local estimate of $c_t$ over a shrinking window. 
Concretely, choose a sequence of integers $k_n \ge 1$ such that $k_n \to \infty$ and $k_n \Delta_n \to 0$ as $n \to \infty$. 
Then, for each $i \ge 0$, define the spot volatility estimator at the $i$-th block as
\[
\widehat{c}(k_n)_i = \frac{1}{k_n \Delta_n} \sum_{m=0}^{k_n-1} (\Delta_{i+m}^n X)^2.
\]
This estimator averages $k_n$ consecutive squared increments, thereby reducing local noise and allowing consistent estimation of the instantaneous volatility $c_{i\Delta_n}$ under suitable regularity conditions.

Upon applying Theorem A.8 in Aït-Sahalia and Jacod \cite{ait14} with the function $g(a)=a^{j l} a^{m r}$, we obtain
\begin{align*}
\bar{V}(\Delta_n) &\stackrel{\text{u.c.p.}}{\Longrightarrow} V, \quad \text{where } \\
\bar{V}(\Delta_n)_t &= 2\Delta_n \sum_{i=0}^{\lfloor t/\Delta_n \rfloor - k_n + 1} \bigl( \widehat{c}(k_n)_i \bigr)^2.
\end{align*}
This estimator enjoys the same convergence rate $1/\sqrt{\Delta_n}$ as the naive estimator $V(\Delta_n)_t$, provided that, the volatility $\sigma_t$ itself is an Itô semimartingale (possibly with jumps) satisfying some conditions restricting the behavior of jumps. It achieves the minimal asymptotic variance among all regular estimators of integrated quarticity under the condition $k_n^2 \Delta_n \to 0$ (while $k_n \to \infty$ and $k_n \Delta_n \to 0$). 
A precise statement of this result, including the explicit expression of the asymptotic variance, is given in Lemma \ref{l1} as a special case. The condition $k_n^2 \Delta_n \to 0$ ensures that all but one of the bias terms become asymptotically negligible; see Jacod and Rosenbaum \cite{jacodqq}, where they introduce the modified version
\begin{equation*}
\bar{V}^{\prime}\left(\Delta_n\right)_t=\left(1-\frac{2}{k_n}\right) \bar{V}\left(\Delta_n\right)_t.
\end{equation*}
A more intuitive estimator, as listed in Aït-Sahalia and Jacod \cite{ait14}, again based on the spot volatility estimators, is available. Namely, we have
\[\widehat{V}\left(\Delta_n\right)\stackrel{\text { u.c.p. }}{\Longrightarrow} V,\] 
where
\[\widehat{V}\left(\Delta_n\right)_t=\frac{1}{\Delta_n} 
\sum_{i=1}^{\left[t / \Delta_n\right]-k_n+1}\left[(\Delta_{i+k_n}^nX)^4-2(\Delta_{i+k_n}^nX)^2\widehat{c}\left(k_n\right)_i\Delta_n+\left(\widehat{c}\left(k_n\right)_i\Delta_n\right)^2\right].\]
The goal of this short note is to establish a central limit theorem (CLT) for this estimator. We first modify $\widehat{V}(\Delta_n)_t$ to eliminate the bias, i.e.,
\[\widehat{V}'(\Delta_n)_t = \frac{1}{\Delta_n} \sum_{i=1}^{\lfloor t/\Delta_n \rfloor - k_n + 1} \left[ (\Delta_{i+k_n}^n X)^4 - 2(\Delta_{i+k_n}^n X)^2 \widehat{c}(k_n)_i \Delta_n + \left(1 - \frac{2}{k_n}\right) \bigl( \widehat{c}(k_n)_i \Delta_n \bigr)^2 \right].\]
Our main result reads as follows:
\begin{theorem}\label{t1}
    Assume $X$ be define in (\ref{model}) satisfying H(2) and $k_n$ satisfies $k_n \rightarrow \infty$, $k_n^2 \Delta_n \rightarrow 0$. We have $\frac{1}{\sqrt{\Delta_n}}(\widehat{V}'\left(\Delta_n\right)-V) \stackrel{\mathcal{L}-s}{\Longrightarrow} (\sqrt{105}-\sqrt{8})\mathcal{W}' $
    where $\mathcal{W}'$ is a continuous process defined on a very good extension of the space $\left(\Omega, \mathcal{F},\left(\mathcal{F}_t\right)_{t \geq 0}, \mathbb{P}\right)$ and, conditionally on $\mathcal{F}$, is a continuous centered Gaussian martingale with variance given by:
\begin{equation}\label{W'}
\widetilde{\mathbb{E}}\left((\mathcal{W}'_t)^2  \mid \mathcal{F}\right)=\int_0^tc_s^4d s .
\end{equation}
\end{theorem}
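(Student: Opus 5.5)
The plan is to write the estimator as a sum of local squared deviations, reduce to a frozen-volatility Gaussian model, and then apply a standard stable CLT for triangular arrays of martingale differences. Since $\widehat V'(\Delta_n)_t=\frac1{\Delta_n}\sum_i\big[(\Delta_{i+k_n}^nX)^2-\widehat c(k_n)_i\Delta_n\big]^2-\frac{2}{k_n}\Delta_n\sum_i(\widehat c(k_n)_i)^2$, we have a sum of squared deviations of a fresh squared increment from a \emph{past} local variance estimate, plus the Jacod--Rosenbaum bias correction.

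\textbf{Step 1 (freezing the volatility).} Under H(2) the drift and the variation of $\sigma$ over windows of length $(k_n+1)\Delta_n$ should be negligible at rate $\sqrt{\Delta_n}$. Concretely, I will replace $\Delta^n_{i+m}X$, for $m=0,\dots,k_n$, by $\sigma_{i\Delta_n}\Delta^n_{i+m}W$. Writing $\Delta_{j}^nW=\sqrt{\Delta_n}\,U_j$ with i.i.d.\ $N(0,1)$ variables $U_j$, I then need to show that the error is $o_p(\sqrt{\Delta_n})$ uniformly in $t$. This follows the standard estimates behind Theorem A.8 in \cite{ait14} and Lemma \ref{l1}:
\begin{itemize}
\item the drift contributes $O(\Delta_n)$ per increment;
\item the volatility variation contributes $O_p(\sqrt{k_n\Delta_n})$ relative error inside $\widehat c$, which after summation is controlled because $k_n^2\Delta_n\to0$;
\item the usual martingale-increment argument, $E(\sigma_s-\sigma_{i\Delta_n}\mid\mathcal F_{i\Delta_n})=O(s-i\Delta_n)$ for an Itô semimartingale volatility, removes the first-order cross terms.
\end{itemize}
The centering $\int c_s^2\,ds$ versus the Riemann sum $\Delta_n\sum c_{i\Delta_n}^2$ is handled by the same argument.

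\textbf{Step 2 (frozen model, bias and leading fluctuation).} With $\bar S_i=\frac1{k_n}\sum_{m<k_n}U_{i+m}^2$, each summand becomes $c_{i\Delta_n}^2\Delta_n\big[U_{i+k_n}^4-2U_{i+k_n}^2\bar S_i+(1-\tfrac2{k_n})\bar S_i^2\big]$.

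The conditional mean is $c^2\Delta_n\big[3-2+(1-\tfrac2{k_n})(1+\tfrac2{k_n})\big]=2c^2\Delta_n+O(\Delta_n/k_n^2)$. Summed over the $\lfloor t/\Delta_n\rfloor$ indices, the error is $O(k_n^{-2})=o(\sqrt{\Delta_n})$ precisely because $k_n^2\Delta_n\to0$, so the $(1-2/k_n)$ factor kills the only non-negligible bias.

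For the fluctuation, I expand $\bar S_i=1+(\bar S_i-1)$ and keep the terms linear in $U_j^2-1$ or $U_j^4-3$. The quadratic term $(\bar S_i-1)^2$ has mean $2/k_n$ and its centered part is $o_p(\sqrt{\Delta_n})$ after summation. I then reorganize by each index $j$: $U_j^2$ enters $\bar S_i$ for $k_n$ windows, with weight $1/k_n$ each. In these windows, the $-2U_{i+k_n}^2\bar S_i$ term contributes about $-2c^2\Delta_n(U_j^2-1)$ and the $\bar S_i^2$ term about $+2c^2\Delta_n(U_j^2-1)$, so these cancel to first order. Using that $c$ is nearly constant over $k_n$ steps, the remaining errors are again $O(\sqrt{k_n\Delta_n})$ relatively. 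The leading term is therefore
\[
\sqrt{\Delta_n}\sum_j c^2_{(j-k_n)\Delta_n}\,\zeta_j,\qquad \zeta_j=(U_j^4-3)-2(U_j^2-1),
\]
which is a sum of $\mathcal F_{j\Delta_n}$-martingale differences.

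\textbf{Step 3 (stable CLT).} I will apply the Jacod stable-convergence criterion for triangular arrays (Jacod and Shiryaev \cite{jacod2013}). The conditional variance sum converges to $E(\zeta^2)\int_0^tc_s^4\,ds$. The conditional covariations with $W$ vanish because $\zeta_j$ is even in $\Delta_j^nW$, and those with bounded martingales orthogonal to $W$ vanish as usual. The Lindeberg condition holds from the fourth moments.

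The limit is thus $\sqrt{E\zeta^2}\,\mathcal W'$, and the constant is obtained by evaluating $E\zeta^2=\mathrm{Var}(U^4)+4\mathrm{Var}(U^2)-4\,\mathrm{Cov}(U^4,U^2)$ with $\mathrm{Var}(U^4)=96$, $\mathrm{Var}(U^2)=2$ and $\mathrm{Cov}(U^4,U^2)=12$. I will compare this value carefully with the stated $(\sqrt{105}-\sqrt8)^2$: my heuristic computation gives $56$, which differs from $(\sqrt{105}-\sqrt8)^2\approx55.04$. So the exact bookkeeping of the cancellation in Step 2 must be checked, and the claimed constant may need adjustment.

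\textbf{Main obstacle.} The main obstacle is Step 2: rigorously justifying the reorganization by index $j$ and the cancellation of the $\widehat c$-fluctuations. This must be done with varying $c$ and with the boundary windows. I would first try to write the $\widehat c$-contributions as $\Delta_n\sum_j(U_j^2-1)\,\frac1{k_n}\sum_{i=j-k_n+1}^{j}(\cdots)_i$ and bound the differences $c_{i\Delta_n}^2-c_{j\Delta_n}^2$ in $L^2$ using H(2).
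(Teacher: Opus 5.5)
Your route differs from the paper's, and your main line is the right one. The paper splits $\widehat V'(\Delta_n)-V$ into $\mathbf{I}+\mathbf{II}+\mathbf{III}$, derives a stable limit for each piece separately, and adds the limits as if they were multiples of a single $\mathcal{W}'$. That is how it arrives at $\sqrt{105}-\sqrt8$. Do not adjust your bookkeeping to match it: your $E\zeta^2=56$ is correct, and $(\sqrt{105}-\sqrt8)^2\approx 55.03$ is not.

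The paper's argument fails in three places:
\begin{itemize}
\item The variance of the fourth-power term is $EU^8-(EU^4)^2=96$, not $105$. The paper's own Remark uses $\frac49\cdot 96=\frac{128}{3}$.
\item The piece $\frac{1}{\sqrt{\Delta_n}}\sum_i(\Delta^n_{i+k_n}X)^2\bigl(\widehat c(k_n)_i-c_{(i-1)\Delta_n}\bigr)$ of $\mathbf{II}$ is declared negligible. In fact it behaves like $\sqrt{\Delta_n}\sum_j c^2_{j\Delta_n}(U_j^2-1)$, and it is exactly what cancels the fluctuation of $\mathbf{III}$.
\item Separately obtained stable limits, which are correlated but not perfectly, cannot simply be added.
\end{itemize}
Your cancellation is in fact exact. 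With $a_j=U_j^2-1$ and $\epsilon_i=\bar S_i-1$,
\[
U_{i+k_n}^4-2U_{i+k_n}^2\bar S_i+\bigl(1-\tfrac{2}{k_n}\bigr)\bar S_i^2=2-\tfrac{2}{k_n}+\zeta_{i+k_n}-\tfrac{4}{k_n}\epsilon_i-2a_{i+k_n}\epsilon_i+\bigl(1-\tfrac{2}{k_n}\bigr)\epsilon_i^2 .
\]
Multiply by $\sqrt{\Delta_n}\,c^2$ and sum. Once $\epsilon_i^2$ is centred at $2/k_n$, the three $\epsilon$-terms contribute only $O_p(k_n^{-1/2})$; note that $a_{i+k_n}\epsilon_i$ is a martingale difference with conditional variance $O(1/k_n)$. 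So the limit is $\sqrt{56}\,\mathcal{W}'$.

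There is, however, a genuine error in your bias step. Taking expectations in the identity above gives the frozen mean $2-4/k_n^2$. The normalized statistic therefore contains the term $-\frac{4}{k_n^2\sqrt{\Delta_n}}\int_0^t c_s^2\,ds$.
\begin{itemize}
\item This term tends to $0$ if and only if $k_n^4\Delta_n\to\infty$, which is a \emph{lower} bound on the growth of $k_n$.
\item The hypothesis $k_n^2\Delta_n\to0$ is an upper bound and points in the wrong direction.
\item For instance, $k_n\asymp\Delta_n^{-1/5}$ satisfies all stated hypotheses, yet this term diverges. So the statement fails for such $k_n$, whatever the constant.
\end{itemize}
You must add an assumption such as $k_n^4\Delta_n\to\infty$. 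The paper's Lemma~\ref{l1}, which it invokes for $\mathbf{III}$, actually requires $k_n^3\Delta_n\to\infty$, and the theorem silently drops this. Alternatively, replace the factor $1-\frac{2}{k_n}$ by $\frac{k_n}{k_n+2}$, which makes the frozen mean exactly $2$. The condition $k_n^2\Delta_n\to0$ belongs to your Step 1, where the volatility variation within a window produces a bias of order $k_n\sqrt{\Delta_n}$ after normalization. With this extra growth condition and the constant $\sqrt{56}$ in place of $\sqrt{105}-\sqrt8$, your plan is sound.
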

\begin{remark}
    Let us compare the asymptotic variances of the various estimators mentioned above. First, applying Lemma \ref{l1} below to the function $g(x)=2x^2$, we see that the asymptotic variance of $\bar{V}'\left(\Delta_n\right)_t$ is $32\int_0^t c_s^4 \, ds$, which is the optimal achievable variance. By Theorem \ref{t1}, the asymptotic variance of $\widehat{V}'\left(\Delta_n\right)_t$ is approximately $55\int_0^t c_s^4 \, ds$, while that of $V(\Delta_n)_t$ is $\frac{128}{3}\int_0^t c_s^4 \, ds$ (approximately $42.7\int_0^t c_s^4 \, ds$).
\end{remark}

\section{Proof}
We first introduce some definitions and preliminary results that will be used throughout the proofs.  
Following A\"it-Sahalia and Jacod \cite{ait14}, for a sufficiently smooth function $g$, the integrated quantity $V(g)=\int_0^t g(c_s)\,ds$ can be approximated by a bias-corrected estimator based on the spot volatility estimates $\widehat{c}(k_n)_i$.  
Specifically, define
\[
\bar{V}'\left(\Delta_n,g\right)_t = \Delta_n \sum_{i=0}^{\lfloor t/\Delta_n \rfloor - k_n + 1} \left[ g\bigl(\widehat{c}(k_n)_i\bigr) - \frac{2}{k_n} g''\bigl(\widehat{c}(k_n)_i\bigr) \,\widehat{c}(k_n)_i^2 \right].
\]
The second term inside the brackets serves as a bias correction. 

It is clear that the estimators $\bar{V}(\Delta_n)_t$ and $\widehat{V}(\Delta_n)_t$ discussed in the previous sections are special cases of this general scheme if  we take the quadratic function $g(x)=x^2$. Under the conditions $k_n\to\infty$ and $k_n^2\Delta_n\to 0$, one can show that $\bar{V}'(\Delta_n,g)_t$ converges uniformly in probability to $V(g)$, and moreover admits a stable central limit theorem with optimal asymptotic variance.

    \begin{lemma}\label{l1}
        Assume Assumption (H-r) for some $r<1$. Let $g$ be a $C^3$ function on $\mathbb{R}$ such that $\left| g^{(j)}(x)\right| \leq K\left(1+|x|^{p-j}\right), j=0,1,2,3,$ for some constants $K>0, p \geq 3$. Then under $k_n^2 \Delta_n \rightarrow 0$ and $k_n^3 \Delta_n \rightarrow \infty$ we have the stable convergence in law
\begin{equation*}
\frac{1}{\sqrt{\Delta_n}}\left(\bar{V}^{\prime}\left(\Delta_n,g\right)_t-V(g)\right) \stackrel{\mathcal{L}-s}{\Longrightarrow} Z(g),
\end{equation*}
where $Z$ is a process defined on an extension of $\left(\Omega, \mathcal{F},\left(\mathcal{F}_t\right)_{t \geq 0}\right.$, $\mathbb{P}$ ), which conditionally on $\mathcal{F}$ is a continuous centered Gaussian martingale with variance
\begin{equation*}
\widetilde{\mathbb{E}}\left(\left(Z(g)_t\right)^2 \mid \mathcal{F}\right)=\int_0^t 2(g'(c_s))^2c_s^2ds.
\end{equation*}
    \end{lemma}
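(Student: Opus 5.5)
The proof will follow the scheme of Jacod and Rosenbaum \cite{jacodqq} (Theorem A.8 in A\"it-Sahalia and Jacod \cite{ait14}). It rests on a second-order Taylor expansion of $g(\widehat{c}(k_n)_i)$ around $c_{i\Delta_n}$ and a careful bookkeeping of the orders of magnitude of the resulting terms.

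\textbf{Reduction and error decomposition.} By the standard localization procedure we may assume that $b$, $\sigma$ and the characteristics of the volatility process are bounded, and that $c$ takes values in a compact set. Then $g$ and its first three derivatives may be treated as bounded, and the jump activity restriction in (H-r), $r<1$, makes the contribution of the volatility jumps negligible. Write $c_i=c_{i\Delta_n}$ and split $\widehat{c}(k_n)_i-c_i=\beta_i+\alpha_i$, where
\[
\beta_i=\frac{1}{k_n\Delta_n}\sum_{m=0}^{k_n-1}\Bigl(\sigma_{i\Delta_n}^2(\Delta^n_{i+m}W)^2-c_i\Delta_n\Bigr)
\]
is the statistical error and $\alpha_i$ collects the drift and the variation of $\sigma$ within the window. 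The standard estimates are $\mathbb{E}(|\beta_i|^q\mid\mathcal{F}_{i\Delta_n})\le K k_n^{-q/2}$ and $\mathbb{E}(|\alpha_i|^q)\le K(k_n\Delta_n)^{q/2}$, together with $|\mathbb{E}(\alpha_i\mid\mathcal{F}_{i\Delta_n})|\le Kk_n\Delta_n$. Expanding $g(\widehat{c}(k_n)_i)=g(c_i)+g'(c_i)(\beta_i+\alpha_i)+\tfrac12 g''(c_i)(\beta_i+\alpha_i)^2+O(|\beta_i+\alpha_i|^3)$ gives the following terms.

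(a) The Riemann term $\Delta_n\sum_i g(c_i)-\int_0^t g(c_s)ds$ is $o_P(\sqrt{\Delta_n})$, because $c$ is an It\^o semimartingale. The boundary effect coming from the last $k_n$ indices is $O(k_n\Delta_n)=o(\sqrt{\Delta_n})$.

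(b) In the linear term $\Delta_n\sum_i g'(c_i)\beta_i$ we exchange the order of summation. Each squared increment $(\Delta^n_jW)^2$ then carries the weight $\frac1{k_n}\sum_{i=j-k_n+1}^{j}g'(c_i)=g'(c_j)+O_P(\sqrt{k_n\Delta_n})$. The main part is therefore $\sum_j g'(c_{j\Delta_n})\bigl(c_{j\Delta_n}(\Delta^n_jW)^2-c_{j\Delta_n}\Delta_n\bigr)$, a sum of martingale increments. Jacod's stable CLT for triangular arrays of martingale differences gives $\Delta_n^{-1/2}$ times this sum $\stackrel{\mathcal{L}-s}{\Longrightarrow}Z(g)$. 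The conditional variance is $\int_0^t 2g'(c_s)^2c_s^2ds$, since $\mathrm{Var}((\Delta W)^2)=2\Delta_n^2$. Orthogonality to $W$ and to bounded martingales orthogonal to $W$ holds by the usual odd-moment arguments. The error from replacing the averaged weights by $g'(c_j)$ is handled as a martingale with vanishing variance.

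(c) In the term $\Delta_n\sum_i g'(c_i)\alpha_i$, the conditional mean part is $O(k_n\Delta_n)=o(\sqrt{\Delta_n})$ exactly because $k_n^2\Delta_n\to0$. The centered part is a sum of weakly dependent terms of size $\sqrt{k_n\Delta_n}$. After regrouping in blocks of length $k_n$ its variance is $O(k_n\Delta_n\cdot\Delta_n)$, hence it is also negligible after scaling. The cross term $\beta_i\alpha_i$ and $\alpha_i^2$ are bounded similarly, with $\alpha_i^2$ of order $k_n\Delta_n$.

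\textbf{The bias term (main obstacle).} The delicate part is $\tfrac12\Delta_n\sum_i g''(c_i)\beta_i^2$. Since $\mathbb{E}(\beta_i^2\mid\mathcal{F}_{i\Delta_n})=2c_i^2/k_n$, this term has a nonvanishing bias of order $1/k_n$, namely $\frac{1}{k_n}\int_0^t g''(c_s)c_s^2ds$. Under $k_n^2\Delta_n\to0$ this bias is much larger than $\sqrt{\Delta_n}$, and the correction term in $\bar V'(\Delta_n,g)$ is designed to cancel it. Matching the constants, $\tfrac12\cdot 2c_s^2/k_n$, against the correction term is the first check I would make. Two quantities must then be shown to be $o_P(\sqrt{\Delta_n})$:
\begin{itemize}
\item the centered fluctuation $\Delta_n\sum_i g''(c_i)(\beta_i^2-\mathbb{E}(\beta_i^2\mid\mathcal{F}_{i\Delta_n}))$. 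Its summands are correlated only over $k_n$ lags and have size $1/k_n$, so it has variance $O(\Delta_n/k_n)$;
\item the error from evaluating the correction at $\widehat{c}(k_n)_i$ instead of $c_i$, which is $\frac1{k_n}O_P(k_n^{-1/2})$.
\end{itemize}
The cubic remainder is likewise $O_P(k_n^{-3/2})$. Both of these last two bounds are $o(\sqrt{\Delta_n})$ precisely when $k_n^3\Delta_n\to\infty$, which is where that hypothesis enters. I would first establish the conditional moment bounds on $\beta_i$ and $\alpha_i$ uniformly in $i$, then do the block-regrouping variance computations. Collecting (a)--(c) and the bias analysis, only the martingale term from (b) survives, which yields the stated limit $Z(g)$.
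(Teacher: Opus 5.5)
The paper does not prove this lemma itself; it cites A\"it-Sahalia--Jacod and Jacod--Rosenbaum. Your outline is essentially that argument: a Taylor expansion of $g(\widehat c(k_n)_i)$ around the spot value, a stable martingale CLT for the linear term, removal of the $1/k_n$ bias by the correction, and $k_n^3\Delta_n\to\infty$ to kill the cubic remainder and the plug-in error of the correction.

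\textbf{The gap.} The one check you postponed is the one that fails for the statement as written. You correctly find that the quadratic term produces the bias $\frac{1}{k_n}\int_0^t g''(c_s)c_s^2\,ds$. But the correction in $\bar V'(\Delta_n,g)$ is $\frac{2}{k_n}\Delta_n\sum_i g''(\widehat c(k_n)_i)\,\widehat c(k_n)_i^2$, which is approximately $\frac{2}{k_n}\int_0^t g''(c_s)c_s^2\,ds$. After your steps (a)--(c) you are therefore left with
\[
\frac{1}{\sqrt{\Delta_n}}\bigl(\bar V'(\Delta_n,g)_t-V(g)_t\bigr)=Z^n_t-\frac{1}{k_n\sqrt{\Delta_n}}\int_0^t g''(c_s)c_s^2\,ds+o_P(1),
\]
where $Z^n$ is your martingale term from (b). The hypothesis $k_n^2\Delta_n\to0$ means $k_n\sqrt{\Delta_n}\to0$, so the middle term diverges whenever $\int_0^t g''(c_s)c_s^2\,ds\neq0$ (e.g.\ $g(x)=x^2$). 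Hence the claim that only the martingale term survives is false, and no argument can prove the lemma with the factor $2/k_n$. The correction has to be $\frac{1}{k_n}g''(\widehat c(k_n)_i)\,\widehat c(k_n)_i^2$. This is also the only choice consistent with the paper's own identity $\bar V'(\Delta_n)=(1-\frac{2}{k_n})\bar V(\Delta_n)$ for $g(x)=2x^2$; the factor $2/k_n$ would give $1-\frac{4}{k_n}$ instead. With this correction your outline does go through.

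\textbf{Smaller points to fix when writing it out.}
\begin{enumerate}
\item In (c), the centred part is essentially $\frac{k_n\Delta_n}{2}\int_0^t g'(c_s)\,dM_s$, with $M$ the martingale part of $c$. Its variance is therefore $O(k_n^2\Delta_n^2)$, not $O(k_n\Delta_n^2)$. It is still negligible, but only because $k_n^2\Delta_n\to0$.
\item For the cross term $\Delta_n\sum_i g''(c_i)\beta_i\alpha_i$, Cauchy--Schwarz only gives $O_P(\sqrt{\Delta_n})$, which is exactly the critical order. ``Bounded similarly'' is not enough: you need an explicit computation showing $\mathbb{E}(\beta_i\alpha_i\mid\mathcal{F}_{(i-1)\Delta_n})=o(\sqrt{\Delta_n})$.
\item The condition $r<1$ in (H-r) restricts the jumps of $X$, not those of $c$. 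For $q\ge2$, volatility jumps only give $\mathbb{E}|c_{t+s}-c_t|^q\le Ks$, so your bound $(k_n\Delta_n)^{q/2}$ on $\mathbb{E}|\alpha_i|^q$ holds only for $q\le2$.
\item $\widehat c(k_n)_i$ uses increments starting at time $(i-1)\Delta_n$. You should centre at $c_{(i-1)\Delta_n}$ to get the predictability you use in (b).
\end{enumerate}
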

    The nest lemma is also from Aït-Sahalia and Jacod \cite{ait14}, page 556.
\begin{lemma}\label{l2}
   We have
\[
U^n_t := \sqrt{\Delta_n} \sum_{i=1}^{\lfloor t/\Delta_n \rfloor} c_{(i-1)\Delta_n} \left( \frac{(\Delta_i^n X)^2}{\sqrt{\Delta_n}} - c_{(i-1)\Delta_n} \right) \stackrel{\mathcal{L}\text{-s}}{\Longrightarrow} Z,
\]
where $Z$ is the process $Z(g)$ from the previous lemma with $g(x)=x^2$.
\end{lemma}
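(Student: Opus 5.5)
Throughout I read the normalisation in $U^n$ as $(\Delta_i^nX)^2/\Delta_n$, so that $U^n_t=\Delta_n^{-1/2}\sum_{i}c_{(i-1)\Delta_n}\bigl((\Delta_i^nX)^2-c_{(i-1)\Delta_n}\Delta_n\bigr)$. The plan is to reduce $U^n$ to a sum of martingale increments driven only by the Brownian motion with frozen volatility, and then to apply the stable central limit theorem for triangular arrays of martingale differences (Jacod's theorem, e.g.\ Theorem IX.7.28 in Jacod and Shiryaev \cite{jacod2013}). By the usual localization argument I may assume that $b$, $\sigma$ and the characteristics of $\sigma$ in H(2) are bounded.

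\textbf{Step 1 (approximation).} Write $\Delta_i^nX=\sigma_{(i-1)\Delta_n}\Delta_i^nW+R_i^n$, where
\[
R_i^n=\int_{(i-1)\Delta_n}^{i\Delta_n}b_s\,ds+\int_{(i-1)\Delta_n}^{i\Delta_n}\bigl(\sigma_s-\sigma_{(i-1)\Delta_n}\bigr)dW_s .
\]
Set $\xi_i^n=\Delta_n^{-1/2}c_{(i-1)\Delta_n}^2\bigl((\Delta_i^nW)^2/\Delta_n-1\bigr)\Delta_n$. I will show that $U^n_t-\sum_{i\le[t/\Delta_n]}\xi_i^n\to0$ u.c.p. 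The difference is $\Delta_n^{-1/2}\sum_i c_{(i-1)\Delta_n}\bigl(2\sigma_{(i-1)\Delta_n}\Delta_i^nW\,R_i^n+(R_i^n)^2\bigr)$. For such sums it is enough to show that the sum of the $\mathcal F_{(i-1)\Delta_n}$-conditional expectations and the sum of the conditional second moments both vanish.
\begin{itemize}
\item The second moments are easy. Since $\mathbb E|R_i^n|^2\le K\Delta_n^2$ by the Itô semimartingale structure of $\sigma$, we get $\mathbb E|\Delta_i^nW\,R_i^n|^2\le K\Delta_n^3$, and after multiplying by $\Delta_n^{-1}$ and summing over $\Delta_n^{-1}$ terms this is $O(\Delta_n)$.
\item The conditional expectations are the delicate part. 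For the volatility part, Itô's formula together with the semimartingale decomposition of $\sigma$ gives a conditional expectation of order $\Delta_n^2$.
\item For the drift part, $\mathbb E\bigl(\Delta_i^nW\int b_s\,ds\mid\mathcal F_{(i-1)\Delta_n}\bigr)=\mathbb E\bigl(\Delta_i^nW\int(b_s-b_{(i-1)\Delta_n})ds\mid\mathcal F_{(i-1)\Delta_n}\bigr)$. This is $\Delta_n^{3/2}\,\eta_i^n$, where $\eta_i^n$ is controlled by the $L^2$-modulus of continuity of $b$ over intervals of length $\Delta_n$.
\end{itemize}

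\textbf{Step 2 (martingale CLT).} The $\xi_i^n$ are martingale differences with respect to $\mathcal F_{i\Delta_n}$. The conditions of Jacod's theorem are checked as follows.
\begin{itemize}
\item The conditional variances satisfy $\sum_i\mathbb E((\xi_i^n)^2\mid\mathcal F_{(i-1)\Delta_n})=2\Delta_n\sum_i c_{(i-1)\Delta_n}^4\to2\int_0^tc_s^4ds$ by Riemann-sum convergence, since $c$ is càdlàg.
\item The Lindeberg condition follows from $\sum_i\mathbb E(\xi_i^n)^4\le K\Delta_n\to0$.
\item For the covariation with $W$, $\mathbb E(\xi_i^n\Delta_i^nW\mid\mathcal F_{(i-1)\Delta_n})=0$ because the integrand is odd in $\Delta_i^nW$.
\item For any bounded martingale $N$ orthogonal to $W$, $\mathbb E(\xi_i^n\Delta_i^nN\mid\mathcal F_{(i-1)\Delta_n})=0$. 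This holds because $\xi_i^n$ is a stochastic integral against $W$ on the $i$-th interval, by Itô's representation.
\end{itemize}
Hence $\sum_i\xi_i^n$ converges stably to a continuous, conditionally centered Gaussian martingale with conditional variance $2\int_0^tc_s^4ds$ defined on a very good extension. This is $Z(g)$ for $g(x)=x^2$ up to the constant convention: Lemma \ref{l1} gives $8\int_0^tc_s^4ds$, which matches $U^n$ if the weight $c$ is replaced by $g'(c)=2c$. I would state the constant accordingly when this lemma is used in the proof of Theorem \ref{t1}.

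\textbf{Main obstacle.} I expect the main difficulty to be the drift cross term in Step 1. A crude bound only gives $\Delta_n^{3/2}$ per term, which is exactly $O(1)$ after the normalisation and summation. The first thing I would try is to use H(2) to give $b$ an $L^2$-continuity in time, so that $\mathbb E|\eta_i^n|\to0$ uniformly and the sum is $o(1)$. If H(2) does not provide this, the fallback is to absorb the drift by a Girsanov change of measure, which preserves stable convergence.
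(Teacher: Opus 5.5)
The paper gives no proof of this lemma; it only cites A\"it-Sahalia and Jacod (p.~556). Your argument is the standard direct proof behind such a citation, and it is correct for the corrected statement. You freeze the volatility at $(i-1)\Delta_n$, show the remainder is negligible using conditional first and second moments, and then apply Jacod's stable CLT for martingale-difference arrays, checking the conditional variance, Lindeberg, and the two orthogonality conditions.

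What your route adds is a self-contained argument and, more importantly, the right constant. Both of your corrections fix genuine errors in the statement, not matters of convention:
\begin{itemize}
\item With $(\Delta_i^nX)^2/\sqrt{\Delta_n}$ one gets $U^n_t\approx\int_0^tc_s^2ds-\Delta_n^{-1/2}\int_0^tc_s^2ds$, which diverges.
\item With the weight $c_{(i-1)\Delta_n}$, the limit has $\mathcal F$-conditional variance $2\int_0^tc_s^4ds$. So it is $\tfrac12 Z(g)$ for $g(x)=x^2$, equivalently $Z(g)$ for $g(x)=x^2/2$. By contrast, $Z(g)$ for $g(x)=x^2$ has variance $\int_0^t2(2c_s)^2c_s^2ds=8\int_0^tc_s^4ds$.
\end{itemize}
The lemma as written is therefore false. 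Where it is invoked in the proof of Theorem~\ref{t1}, the corresponding sum converges to $\sqrt2\,\mathcal W'$, not $\sqrt8\,\mathcal W'$.

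\textbf{Minor points.}
\begin{itemize}
\item The drift term you flag as the main obstacle is routine when $b$ is c\`adl\`ag and, after localization, bounded. The piece $\Delta_n b_{(i-1)\Delta_n}\Delta_i^nW$ has zero conditional mean and negligible conditional variance. For the remainder, Cauchy--Schwarz gives an $L^1$ bound $K\sqrt t\,\bigl(\mathbb E\int_0^t|b_s-b_{\Delta_n[s/\Delta_n]}|^2ds\bigr)^{1/2}$, which tends to $0$ by dominated convergence. So you only need the average over $i$ of $\mathbb E|\eta_i^n|$ to vanish, not uniform smallness in $i$.
\item Your Girsanov fallback needs $b=\sigma\theta$ with $\theta$ locally bounded. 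That is not available in general when $\sigma$ can vanish.
\item The bound $\mathbb E|\Delta_i^nW\,R_i^n|^2\le K\Delta_n^3$ does not follow from $\mathbb E|R_i^n|^2\le K\Delta_n^2$. If $\sigma$ jumps, H\"older with $\mathbb E|R_i^n|^4\le K\Delta_n^3$ gives $K\Delta_n^{5/2}$, which still suffices.
\item You are right to assume that $\sigma$ itself is an It\^o semimartingale. The lemma is stated without hypotheses, and without some such regularity of $c$ the bias $\Delta_n^{-1/2}\sum_i c_{(i-1)\Delta_n}\mathbb E\bigl(\int_{(i-1)\Delta_n}^{i\Delta_n}(c_s-c_{(i-1)\Delta_n})ds\mid\mathcal F_{(i-1)\Delta_n}\bigr)$ need not vanish.
\end{itemize}
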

{\bf Proof of Theorem \ref{t1}.}  
We start from the following basic decomposition of $\widehat{V}'\left(\Delta_n\right)-V$:
\begin{align*}
    \widehat{V}'\left(\Delta_n\right)-V =&\ \frac{1}{\Delta_n} \sum_{i=1}^{\lfloor t/\Delta_n \rfloor - k_n + 1} (\Delta_{i+k_n}^n X)^4 - \int_0^t 3c_s^2 \, ds \\
    &- 2\Biggl( \sum_{i=1}^{\lfloor t/\Delta_n \rfloor - k_n + 1} (\Delta_{i+k_n}^n X)^2 \widehat{c}(k_n)_i - \int_0^t c_s^2 \, ds \Biggr) \\
    &+ \frac{1}{\Delta_n} \sum_{i=1}^{\lfloor t/\Delta_n \rfloor - k_n + 1} \left(1 - \frac{2}{k_n}\right) \bigl( \widehat{c}(k_n)_i \Delta_n \bigr)^2 - \int_0^t c_s^2 \, ds \\
    &:= \mathbf{I} + \mathbf{II} + \mathbf{III}.
\end{align*}

By the theorem in Jacod and Protter \cite{jacod2012}, we have
\[
\sqrt{\Delta_n}\Biggl( \sum_{i=1}^{\lfloor t/\Delta_n \rfloor} (\Delta_i^n X)^4 - \int_0^t 3c_s^2 \, ds \Biggr) \stackrel{\mathcal{L}\text{-s}}{\Longrightarrow} \sqrt{105}\,\mathcal{W}'.
\]

A direct calculation yields
\[
\Delta_n^{-3/2}\Biggl( \sum_{i=1}^{\lfloor t/\Delta_n \rfloor - k_n + 1} (\Delta_{i+k_n}^n X)^4 - \sum_{i=1}^{\lfloor t/\Delta_n \rfloor} (\Delta_i^n X)^4 \Biggr)
= \Delta_n^{-3/2}\Biggl( \sum_{i=1}^{k_n} (\Delta_i^n X)^4 + (\Delta_{\lfloor t/\Delta_n\rfloor+1}^n X)^4 \Biggr).
\]
For an It\^{o} semimartingale, we have the basic estimate $\mathbb{E}\bigl[\sup_{0\le u\le s} |X_{t+u} - X_u|^p\bigr] \le K s^{p/2}$ .  Combining the two convergences above and the fact $k_n\sqrt{\Delta_n}\to 0$, we obtain $\frac{1}{\sqrt{\Delta_n}}\mathbf{I} \stackrel{\mathcal{L}\text{-s}}{\Longrightarrow} \sqrt{105}\,\mathcal{W}'$.

By Lemma \ref{l1}, we have $\frac{1}{\sqrt{\Delta_n}}\mathbf{III} \stackrel{\mathcal{L}\text{-s}}{\Longrightarrow} -2\sqrt{8}\,\mathcal{W}'$.  
Therefore, it remains to prove $\frac{1}{\sqrt{\Delta_n}}\mathbf{II} \stackrel{\mathcal{L}\text{-s}}{\Longrightarrow} \sqrt{8}\,\mathcal{W}'$, which will complete the proof of the theorem.  

By Aït-Sahalia and Jacod \cite{ait14}, Lemma B.10, with $g(x)=x^2$,
\[
\frac{1}{\sqrt{\Delta_n}}\Biggl( \int_0^t c_s^2 \, ds - \sum_{i=1}^{\lfloor t/\Delta_n \rfloor - k_n + 1} \Delta_n c_{(i-1)\Delta_n}^2 \Biggr) \to 0,
\]
so it suffices to show
\begin{equation}\label{main}
\frac{1}{\sqrt{\Delta_n}}\sum_{i=1}^{\lfloor t/\Delta_n \rfloor - k_n + 1} \bigl( (\Delta_{i+k_n}^n X)^2 \widehat{c}(k_n)_i - \Delta_n c_{(i-1)\Delta_n}^2 \bigr) \stackrel{\mathcal{L}\text{-s}}{\Longrightarrow} \sqrt{8}\,\mathcal{W}'.
\end{equation}
Indeed, we decompose
\begin{align*}
\frac{1}{\sqrt{\Delta_n}}&\sum_{i=1}^{\lfloor t/\Delta_n \rfloor - k_n + 1} \bigl( (\Delta_{i+k_n}^n X)^2 \widehat{c}(k_n)_i - \Delta_n c_{(i-1)\Delta_n}^2 \bigr) \\
&= \frac{1}{\sqrt{\Delta_n}}\sum_{i=1}^{\lfloor t/\Delta_n \rfloor - k_n + 1} \bigl( (\Delta_{i+k_n}^n X)^2 c_{(i-1)\Delta_n} - \Delta_n c_{(i-1)\Delta_n}^2 \bigr) \\
&+ \frac{1}{\sqrt{\Delta_n}}\sum_{i=1}^{\lfloor t/\Delta_n \rfloor - k_n + 1} (\Delta_{i+k_n}^n X)^2 \bigl( \widehat{c}(k_n)_i - c_{(i-1)\Delta_n} \bigr).
\end{align*}
Applying Lemma \ref{l1} with $g(x)=x$, the second term tends to zero as $n\to\infty$.  
The first term equals
\[
\sqrt{\Delta_n}\sum_{i=1}^{\lfloor t/\Delta_n \rfloor - k_n + 1} c_{(i-1)\Delta_n} \Biggl( \Bigl(\frac{\Delta_{i+k_n}^n X}{\Delta_n}\Bigr)^2 - c_{(i-1)\Delta_n} \Biggr).
\]
Using the same argument as in the treatment of $\mathbf{I}$, we may replace $\Delta_{i+k_n}^n X$ by $\Delta_i^n X$. Hence it is enough to show
\[
\sqrt{\Delta_n}\sum_{i=1}^{\lfloor t/\Delta_n \rfloor} c_{(i-1)\Delta_n} \Biggl( \Bigl(\frac{\Delta_i^n X}{\Delta_n}\Bigr)^2 - c_{(i-1)\Delta_n} \Biggr) \stackrel{\mathcal{L}\text{-s}}{\Longrightarrow} \sqrt{8}\,\mathcal{W}'.
\]
By Lemma \ref{l2}, this convergence holds, which establishes (\ref{main}). $\hfill\square$
\section{Conclusion}
In this paper, we have derived a central limit theorem (CLT) for a new estimator $\widehat{V}'\left(\Delta_n\right)_t$. It should be noted that its asymptotic variance is larger than that of $\bar{V}\left(\Delta_n\right)_t$. We also observe that the result can be extended to the multidimensional setting, where
\begin{equation*}
V_t^{jl, mr}:=\widetilde{\mathbb{E}}\left(\mathcal{W}_t^{jl} \mathcal{W}_t^{mr} \mid \mathcal{F}\right)=\int_0^t\left(c_s^{jm} c_s^{lr}+c_s^{jr} c_s^{ml}\right) d s,
\end{equation*}
and the corresponding estimator is given by
\begin{align*}
    \widehat{V}\left(\Delta_n\right)_t^{j l, m r}&=\frac{1}{\Delta_n} \sum_{i=1}^{\lfloor t/\Delta_n \rfloor - k_n + 1} \left(\Delta_{i+k_n}^n X^j \Delta_{i+k_n}^n X^l - \Delta_n \widehat{c}(k_n)_i^{j l}\right)\\
    &\quad \times \left(\Delta_{i+k_n}^n X^m \Delta_{i+k_n}^n X^r - \Delta_n \widehat{c}(k_n)_i^{m r}\right).
\end{align*}
The proof would not require substantial modification for the terms $\mathbf{I}$ and $\mathbf{III}$ in previous section. For $\mathbf{II}$, by applying Theorem 10.3.2 in Jacod and Protter \cite{jacod2012} with the pair process $(X,c)$ and the (non-random) function $\bar{F}$ on $(\mathbb{R}^d \times \mathcal{M}_d^{+})^2$ defined as
\[
\bar{F}\bigl((x,y),(x',y')\bigr)=y^{mr}\bigl(x'^{\,j} x'^{\,l}-y^{jl}\bigr)
\quad\text{and}\quad
\bar{F}\bigl((x,y),(x',y')\bigr)=y^{jl}\bigl(x'^{\,m} x'^{\,r}-y^{mr}\bigr),
\]
one can obtain stable convergences for the second and third terms in the expansion of $\widehat{V}\left(\Delta_n\right)_t^{j l, m r}$.

Furthermore, it would be interesting to establish a CLT for
\[
\frac{1}{\sqrt{\Delta_n}}\left( \sum_{i=1}^{[ t/\Delta_n ] - k_n + 1} g\left(\frac{\Delta_{i+k_n}^n X}{\sqrt{\Delta_n}}\right) f\bigl(\widehat{c}(k_n)_i\bigr) - \int_0^t \rho_{c_s}(g) f(c_s) ds \right),
\]
where $g$ and $f$ are given functions. This question is left for future work.

\end{document}